\newtheorem{theorem}{Theorem}
\newtheorem{lemma}{Lemma}
\theoremstyle{remark}
\newtheorem*{remark}{\bf Remark}
\newtheorem*{acknowledgements}{\bf Acknowledgements}
\renewcommand\d{{\mathrm d}}
\newcommand\Li{\operatorname{Li}}
\newcommand\ol{\overline}
\newcommand\wt{\widetilde}
\newcommand\bs{\boldsymbol s}
\let\lf\lfloor
\let\rf\rfloor
\begin{document}

\title{On a family of polynomials related to $\zeta(2,1)=\zeta(3)$}

\author{Wadim Zudilin}
\address{School of Mathematical and Physical Sciences,
University of Newcastle, Calla\-ghan NSW 2308, AUSTRALIA}
\email{wadim.zudilin@newcastle.edu.au}

\date{24 April 2015. \emph{Revised}: 31 March 2017}

\begin{abstract}
We give a new proof of the identity $\zeta(\{2,1\}^l)=\zeta(\{3\}^l)$ of the multiple zeta values,
where $l=1,2,\dots$, using generating functions of the underlying generalized polylogarithms.
In the course of study we arrive at (hypergeometric) polynomials satisfying 3-term recurrence relations,
whose properties we examine and compare with analogous ones of polynomials originated from
an \hbox{(ex-)}\allowbreak conjectural identity of Borwein, Bradley and Broadhurst.
\end{abstract}

\subjclass[2010]{11M06, 11M41, 11G55, 33C20}
\keywords{multiple zeta value; generalized polylogarithm; generalized hypergeometric function; 3-term recurrence relation; (bi-)orthogonal polynomials}

\thanks{The work is supported by Australian Research Council grant DP140101186.}

\maketitle

\section{Introduction}
\label{s1}

The first thing one normally starts with, while learning about the multiple zeta values (MZVs)
$$
\zeta(\bs)=\zeta(s_1,s_2,\dots,s_l)
=\sum_{n_1>n_2>\dots>n_l\ge1}\frac1{n_1^{s_1}n_2^{s_2}\dotsb n_l^{s_l}},
$$
is Euler's identity $\zeta(2,1)=\zeta(3)$\,---\,see \cite{BBr} for an account of proofs and generalizations of the remarkable equality.
One such generalization reads
\begin{equation}
\label{id1}
\zeta(\{2,1\}^l)=\zeta(\{3\}^l)
\qquad\text{for}\quad l=1,2,\dots,
\end{equation}
where the notation $\{\bs\}^m$ denotes the multi-index with $m$ consecutive repetitions of the same index $\bs$.
The only known proof of \eqref{id1} available in the literature makes use of the duality relation of MZVs, originally conjectured in~\cite{Ho}
and shortly after established in \cite{Za}. The latter relation is based on a simple iterated-integral representation of MZVs
(see \cite{Za} but also \cite{BBr,BBB,Zu} for details) but, unfortunately, it is not capable of establishing similar-looking identities
\begin{equation}
\label{id1a}
\zeta(\{3,1\}^l)=\frac{2\pi^{4l}}{(4l+2)!}
\qquad\text{for}\quad l=1,2,\dotsc.
\end{equation}
The equalities \eqref{id1a} were proven in~\cite{BBB} using a simple generating series argument.

The principal goal of this note is to give a proof of \eqref{id1} via generating functions and to discuss, in this context,
a related ex-conjecture of the alternating MZVs. An interesting outcome of this approach is a family of (hypergeometric) polynomials
that satisfy a 3-term recurrence relation; a shape of the relation and (experimentally observed) structure of the zeroes of the polynomials
suggest their bi-orthogonality origin \cite{IN,IM,SZ}.

\section{Multiple polylogarithms}
\label{s2}

For $l=1,2,\dots$, consider the generalized polylogarithms
\begin{align*}
\Li_{\{3\}^l}(z)
&=\sum_{n_1>n_2>\dots>n_l\ge1}\frac{z^{n_1}}{n_1^3n_2^3\dotsb n_l^3},
\\
\Li_{\{2,1\}^l}(z)
&=\sum_{n_1>m_1>n_2>m_2>\dots>n_l>m_l\ge1}\frac{z^{n_1}}{n_1^2m_1n_2^2m_2\dotsb n_l^2m_l},
\\
\Li_{\{\ol2,1\}^l}(z)
&=\sum_{n_1>m_1>n_2>m_2>\dots>n_l>m_l\ge1}
\frac{z^{n_1}(-1)^{n_1+n_2+\dots+n_l}}{n_1^2m_1n_2^2m_2\dotsb n_l^2m_l};
\end{align*}
if $l=0$ we set all these functions to be~$1$. Then at $z=1$,
\begin{equation*}
\zeta(\{3\}^l)=\Li_{\{3\}^l}(1)
\quad\text{and}\quad
\zeta(\{2,1\}^l)=\Li_{\{2,1\}^l}(1),
\end{equation*}
and we also get the related alternating MZVs
\begin{equation*}
\zeta(\{\ol2,1\}^l)=\Li_{\{\ol2,1\}^l}(1)
\end{equation*}
from the specialization of the third polylogarithm.

Since
\begin{align*}
\biggl((1-z)\frac{\d}{\d z}\biggr)\biggl(z\frac{\d}{\d z}\biggr)^2\Li_{\{3\}^l}(z)
&=\Li_{\{3\}^{l-1}}(z),
\\
\biggl((1-z)\frac{\d}{\d z}\biggr)^2\biggl(z\frac{\d}{\d z}\biggr)\Li_{\{2,1\}^l}(z)
&=\Li_{\{2,1\}^{l-1}}(z),
\\
\biggl((1+z)\frac{\d}{\d z}\biggr)^2\biggl(z\frac{\d}{\d z}\biggr)\Li_{\{\ol2,1\}^l}(z)
&=\Li_{\{\ol2,1\}^{l-1}}(-z)
\end{align*}
for $l=1,2,\dots$, the generating series
\begin{gather*}
C(z;t)=\sum_{l=0}^\infty\Li_{\{3\}^l}(z)t^{3l},
\\
B(z;t)=\sum_{l=0}^\infty\Li_{\{2,1\}^l}(z)t^{3l}
\quad\text{and}\quad
A(z;t)=\sum_{l=0}^\infty\Li_{\{\ol 2,1\}^l}(z)t^{3l}
\end{gather*}
satisfy linear differential equations. Namely, we have
$$
\biggl(\biggl((1-z)\frac{\d}{\d z}\biggr)\biggl(z\frac{\d}{\d z}\biggr)^2-t^3\biggr)C(z;t)=0,
\quad
\biggl(\biggl((1-z)\frac{\d}{\d z}\biggr)^2\biggl(z\frac{\d}{\d z}\biggr)-t^3\biggr)B(z;t)=0
$$
and
$$
\biggl(\biggl((1-z)\frac{\d}{\d z}\biggr)^2\biggl(z\frac{\d}{\d z}\biggr)
\biggl((1+z)\frac{\d}{\d z}\biggr)^2\biggl(z\frac{\d}{\d z}\biggr)-t^6\biggr)A(z;t)=0,
$$
respectively. The identities \eqref{id1} and identities
\begin{equation*}
\frac1{8^l}\zeta(\{2,1\}^l)=\zeta(\{\ol2,1\}^l)
\qquad\text{for}\quad l=1,2,\dots,
\end{equation*}
conjectured in~\cite{BBB} and confirmed in \cite{Zh} by means of a nice though sophisticated machinery of double shuffle relations and the `distribution' relations
(see also an outline in \cite{BBa}), translate into
\begin{equation*}
C(1;t)=B(1;t)=A(1;2t).
\end{equation*}

Note that
\begin{equation}
\label{id-C}
C(1;t)=\sum_{l=0}^\infty t^{3l}\sum_{n_1>n_2>\dots>n_l\ge1}\frac1{n_1^3n_2^3\dotsb n_l^3}
=\prod_{j=1}^\infty\biggl(1+\frac{t^3}{j^3}\biggr).
\end{equation}
At the same time, the differential equation for $C(z;t)=\sum_{n=0}^\infty C_n(t)z^n$ results in
$$
-n^3C_n+(n+1)^3C_{n+1}=t^3C_n
$$
implying
$$
\frac{C_{n+1}}{C_n}=\frac{n^3+t^3}{(n+1)^3}
=\frac{(n+t)(n+e^{2\pi i/3}t)(n+e^{4\pi i/3}t)}{(n+1)^3}
$$
and leading to the hypergeometric form
\begin{equation}
\label{id-C-hyper}
C(z;t)={}_3F_2\biggl(\begin{matrix} t, \, \omega t, \, \omega^2t \\ 1, \, 1 \end{matrix}\biggm| z\biggr),
\end{equation}
where $\omega=e^{2\pi i/3}$. We recall that
$$
{}_{m+1}F_m\biggl(\begin{matrix} a_0, \, a_1, \, \dots, \, a_m \\ b_1, \, \dots, \, b_m \end{matrix}\biggm| z\biggr)
=\sum_{n=0}^\infty\frac{(a_0)_n(a_1)_n\dotsb(a_m)_n}{n!\,(b_1)_n\dotsb(b_m)_n}\,z^n,
$$
where $(a)_n=\Gamma(a+n)/\Gamma(a)$ denotes the Pochhammer symbol (also known as the `shifted factorial' because $(a)_n=a(a+1)\dotsb(a+n-1)$ for $n=1,2,\dots$).
It is not hard to see that the sequences $A_n(t)$ and $B_n(t)$ from $A(z;t)=\sum_{n=0}^\infty A_n(t)z^n$ and $B(z;t)=\sum_{n=0}^\infty B_n(t)z^n$
do not satisfy 2-term recurrence relations with polynomial coefficients. Thus, no hypergeometric representations of the type \eqref{id-C-hyper}
are available for them.

\section{Special polynomials}
\label{s3}

The differential equation for $B(z;t)$ translates into the 3-term recurrence relation
\begin{equation}
n^3B_n-(n+1)^2(2n+1)B_{n+1}+(n+2)^2(n+1)B_{n+2}=t^3B_n
\label{recB}
\end{equation}
for the coefficients $B_n=B_n(t)$; the initial values are $B_0=1$ and $B_1=0$.

\begin{lemma}
\label{L1}
We have
\begin{equation}
B_n(t)=\frac1{n!}\sum_{k=0}^n\frac{(\omega t)_k(\omega^2t)_k(t)_{n-k}(-t+k)_{n-k}}{k!\,(n-k)!}
=\frac{(t)_n(-t)_n}{n!^2}\,{}_3F_2\biggl(\begin{matrix} -n, \, \omega t, \, \omega^2t \\ -t, \, 1-n-t \end{matrix}\biggm| 1\biggr).
\label{mi}
\end{equation}
\end{lemma}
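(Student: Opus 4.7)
The equivalence of the two expressions for $B_n(t)$ on the right of \eqref{mi} is immediate from the Pochhammer identities $(-n)_k=(-1)^k n!/(n-k)!$, $(-t+k)_{n-k}=(-t)_n/(-t)_k$ and $(1-n-t)_k=(-1)^k(t)_n/(t)_{n-k}$: after multiplication by the prefactor $(t)_n(-t)_n/n!^2$, the general summand of the ${}_3F_2$ series matches term-by-term the summand in the explicit sum. So it suffices to prove the first equality in \eqref{mi}.

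The plan for this is to show that $T_n(t):=(1/n!)\sum_{k=0}^{n}F_n(k)$, with
$$F_n(k):=\frac{(\omega t)_k(\omega^2 t)_k(t)_{n-k}(-t+k)_{n-k}}{k!\,(n-k)!},$$
satisfies $T_0=1$, $T_1=0$ and the three-term recurrence \eqref{recB}; uniqueness of the sequence determined by these data then forces $T_n=B_n$. The initial values are immediate: at $n=0$ only the $k=0$ term is present and equals $1$, while at $n=1$ the contributions $t(-t)=-t^2$ (from $k=0$) and $\omega t\cdot\omega^2t=\omega^3t^2=t^2$ (from $k=1$) cancel. Multiplying \eqref{recB} by $n!$ converts it into the equivalent statement
$$(n^3-t^3)S_n-(n+1)(2n+1)S_{n+1}+(n+2)S_{n+2}=0,\qquad S_n:=\sum_k F_n(k),$$
which is the identity to be established.

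I establish this by creative telescoping. The ratios $F_n(k+1)/F_n(k)$ and $F_{n+1}(k)/F_n(k)$ are easily seen to be rational in $(n,k,t)$, so Zeilberger's algorithm produces a rational certificate $R(n,k)$ for which $G_n(k):=R(n,k)F_n(k)$ satisfies
$$(n^3-t^3)F_n(k)-(n+1)(2n+1)F_{n+1}(k)+(n+2)F_{n+2}(k)=G_n(k+1)-G_n(k).$$
Because $F_n(k)$ has finite support in $k$, summation telescopes to $0$ and yields the recurrence. The main obstacle is producing and then checking the certificate $R(n,k)$: once it is written down, the displayed identity reduces, after dividing through by $F_n(k)$ and clearing the denominators arising from the shifted ratios $F_{n+j}(k)/F_n(k)$ and $F_n(k+1)/F_n(k)$, to a polynomial identity in $n,k,t$, which is routine but rather tedious to verify by hand.
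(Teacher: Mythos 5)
Your proposal is correct and takes essentially the same route as the paper: the paper's proof likewise checks the initial values $B_0=1$, $B_1=0$ and obtains the recurrence \eqref{recB} for the sum in \eqref{mi} by the Gosper--Zeilberger algorithm of creative telescoping, with uniqueness of the solution implicit. Your term-by-term Pochhammer reduction identifying the explicit sum with the ${}_3F_2$ form is a routine verification the paper leaves unstated.
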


\begin{proof}
The recursion \eqref{recB} for the sequence in \eqref{mi} follows from application of the
Gosper--Zeilberger algorithm of creative telescoping. The initial values for $n=0$ and $1$ are straightforward.
\end{proof}

\begin{remark}
The hypergeometric form in \eqref{mi} was originally prompted by \cite[Theorem 3.4]{Mi}:
the change of variable $z\mapsto1-z$ in the differential equation for $B(z;t)$ shows that
the function $f(z)=B(1-z;t)$ satisfies the hypergeometric differential equation with upper
parameters $-t$, $-\omega t, \, -\omega^2t$ and lower parameters $0$, $0$.
\end{remark}

It is not transparent from the formula \eqref{mi} (but immediate from the recursion~\eqref{recB})
that $B_n(t)\in t^3\mathbb Q[t^3]$ for $n=0,1,2,\dots$; the classical transformations
of $_3F_2(1)$ and their representations as $_6F_5(-1)$ hypergeometric series (see \cite{Ba})
do not shed a light on this belonging either.

\begin{lemma}
\label{L2}
We have
\begin{equation}
\label{id-B}
B(1;t)=\prod_{j=1}^\infty\biggl(1+\frac{t^3}{j^3}\biggr).
\end{equation}
\end{lemma}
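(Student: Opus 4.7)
The plan is to compute $B(1;t)=\sum_{n\ge 0}B_n(t)$ explicitly from the hypergeometric representation \eqref{mi} of Lemma~\ref{L1}, ending up with a product of three Gamma functions which then expands into the desired infinite product. Note that, via \eqref{id-C}, Lemma~\ref{L2} is exactly the assertion $B(1;t)=C(1;t)$, so the approach gives a proof of \eqref{id1} packaged at the level of generating series, without invoking duality.

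First I would substitute the double-sum form of $B_n(t)$ from \eqref{mi} into $\sum_n B_n(t)$ and interchange the order of summation with $m=n-k$, using $(m+k)!=k!\,(k+1)_m$ to split the factorials. The inner sum over $m$ is then ${}_2F_1(t,k-t;k+1;1)$, whose parametric excess $c-a-b$ equals $1$, so Gauss's summation applies and collapses it into a single $\Gamma$-ratio involving $\Gamma(1+t)$ and $(1-t)_k$. After this simplification, the outer sum over $k$ is ${}_2F_1(\omega t,\omega^2 t;1-t;1)$; crucially, the relation $1+\omega+\omega^2=0$ once again forces $c-a-b=1$, and the identities $1-t-\omega t=1+\omega^2 t$ and $1-t-\omega^2 t=1+\omega t$ turn Gauss's theorem into
$$
B(1;t)=\frac{1}{\Gamma(1+t)\,\Gamma(1+\omega t)\,\Gamma(1+\omega^2 t)}.
$$

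To finish, I would invoke the Weierstrass product $1/\Gamma(1+x)=e^{\gamma x}\prod_{j\ge1}(1+x/j)e^{-x/j}$ at $x=t,\omega t,\omega^2 t$ and multiply. The Euler--Mascheroni prefactors cancel because $1+\omega+\omega^2=0$, the linear exponentials at each level $j$ cancel for the same reason, and the three remaining factors coalesce via $(1+u)(1+\omega u)(1+\omega^2 u)=1+u^3$ into $1+t^3/j^3$, yielding \eqref{id-B}.

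The main obstacle is essentially pattern-recognition: seeing that the inner sum is a Gauss-summable ${}_2F_1(1)$, and then that $1+\omega+\omega^2=0$ renders the second ${}_2F_1(1)$ Gauss-summable as well. After that, the remaining algebra is mechanical. Convergence of both ${}_2F_1$'s is guaranteed by $\operatorname{Re}(c-a-b)=1>0$; the interchange of summation is legitimate for $t$ in a neighbourhood of the origin by absolute convergence, and the identity then extends to all $t$ by analyticity of both sides.
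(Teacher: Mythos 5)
Your proposal is correct and follows essentially the same route as the paper's own proof: interchange the double sum from \eqref{mi}, apply Gauss's summation to the inner ${}_2F_1(t,k-t;k+1;1)$ and then again to the resulting ${}_2F_1(\omega t,\omega^2t;1-t;1)$ (both times with parametric excess $1$ thanks to $1+\omega+\omega^2=0$), arriving at $1/(\Gamma(1+t)\Gamma(1+\omega t)\Gamma(1+\omega^2t))$. Your only additions\,---\,spelling out the Weierstrass-product step and the convergence/interchange justification\,---\,are details the paper leaves implicit.
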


\begin{proof}
This follows from the derivation
\begin{align*}
B(1;t)=\sum_{n=0}^\infty B_n(t)
&=\sum_{n=0}^\infty\frac1{n!}\sum_{k=0}^n\frac{(\omega t)_k(\omega^2t)_k(t)_{n-k}(-t+k)_{n-k}}{k!\,(n-k)!}
\\
&=\sum_{k=0}^\infty\frac{(\omega t)_k(\omega^2t)_k}{k!^2}\sum_{m=0}^\infty\frac{(t)_m(-t+k)_m}{m!\,(k+1)_m}
\displaybreak[2]\\
&=\sum_{k=0}^\infty\frac{(\omega t)_k(\omega^2t)_k}{k!^2}\cdot{}_2F_1\biggl(\begin{matrix} t, \, -t+k \\ k+1 \end{matrix}\biggm| 1\biggr)
\displaybreak[2]\\
&=\frac1{\Gamma(1-t)\Gamma(1+t)}\sum_{k=0}^\infty\frac{(\omega t)_k(\omega^2t)_k}{k!\,(1-t)_k}
\displaybreak[2]\\
&=\frac1{\Gamma(1-t)\Gamma(1+t)}\cdot{}_2F_1\biggl(\begin{matrix} \omega t, \, \omega^2t \\ 1-t \end{matrix}\biggm| 1\biggr)
\displaybreak[2]\\
&=\frac1{\Gamma(1-t)\Gamma(1+t)}\cdot\frac{\Gamma(1-t)}{\Gamma(1-(1+\omega)t)\Gamma(1-(1+\omega^2)t)}
\\
&=\frac1{\Gamma(1+t)\Gamma(1+\omega t)\Gamma(1+\omega^2t)}
=\prod_{j=1}^\infty\biggl(1+\frac{t^3}{j^3}\biggr),
\end{align*}
where we applied twice Gauss's summation \cite[Section 1.3]{Ba}
$$
{}_2F_1\biggl(\begin{matrix} a, \, b \\ c \end{matrix}\biggm| 1\biggr)
=\frac{\Gamma(c)\,\Gamma(c-a-b)}{\Gamma(c-a)\,\Gamma(c-b)}
$$
valid when $\Re(c-a-b)>0$.
\end{proof}

Finally, we deduce from comparing \eqref{id-C} and \eqref{id-B},

\begin{theorem}
The identity $\zeta(\{3\}^l)=\zeta(\{2,1\}^l)$ is valid for $l=1,2,\dots$\,.
\end{theorem}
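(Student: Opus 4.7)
The plan is to exploit the coincidence of two closed forms that have already been established for the generating series evaluated at $z=1$. Since the theorem is stated immediately after Lemmas~\ref{L1} and~\ref{L2}, essentially all the heavy lifting is done; what remains is a coefficient-comparison argument between two formal power series in~$t$.

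First I would recall that, by construction,
$$
C(1;t)=\sum_{l=0}^\infty\zeta(\{3\}^l)\,t^{3l}
\quad\text{and}\quad
B(1;t)=\sum_{l=0}^\infty\zeta(\{2,1\}^l)\,t^{3l},
$$
both viewed as formal power series in $t^3$ (or as analytic functions for $|t|$ small). Next, I would invoke the explicit product expression \eqref{id-C} for $C(1;t)$, which is obtained by directly factoring the telescoping sum for $\zeta(\{3\}^l)$, and the product expression \eqref{id-B} for $B(1;t)$ obtained in Lemma~\ref{L2}. Together these yield
$$
C(1;t)=\prod_{j=1}^\infty\biggl(1+\frac{t^3}{j^3}\biggr)=B(1;t).
$$

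Finally I would equate coefficients of $t^{3l}$ on both sides to conclude $\zeta(\{3\}^l)=\zeta(\{2,1\}^l)$ for every $l\ge1$ (the $l=0$ case being the trivial $1=1$). A minor technical point worth noting, but not a real obstacle, is the interchange of summations that takes us from $\sum_n B_n(t)$ to the iterated series in the proof of Lemma~\ref{L2}; this is justified by absolute convergence for $|t|$ sufficiently small, which is all that is needed to legitimize the identification of coefficients of $t^{3l}$. There is no genuine obstacle in this final step: the work was done in establishing the two lemmas, and the theorem is essentially a bookkeeping consequence of $C(1;t)=B(1;t)$.
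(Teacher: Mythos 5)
Your proposal is correct and matches the paper's own argument exactly: the theorem there is deduced precisely by comparing the product expression \eqref{id-C} for $C(1;t)$ with Lemma~\ref{L2}'s identical product \eqref{id-B} for $B(1;t)$ and reading off coefficients of $t^{3l}$. Your remark about justifying the series rearrangement by absolute convergence for small $|t|$ is a reasonable (if implicit in the paper) bookkeeping point, not a deviation.
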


\section{A general family of polynomials}
\label{s4}

It is not hard to extend Lemma~\ref{L1} to the one-parameter family of polynomials
\begin{align}
B_n^\alpha(t)
&=\frac1{n!}\sum_{k=0}^n\frac{(\omega t)_k(\omega^2t)_k(\alpha+t)_{n-k}(\alpha-t+k)_{n-k}}{k!\,(n-k)!}
\nonumber\\
&=\frac1{n!}\sum_{k=0}^n\frac{(\alpha+\omega t)_k(\alpha+\omega^2t)_k(t)_{n-k}(\alpha-t+k)_{n-k}}{k!\,(n-k)!}.
\label{eqB}
\end{align}

\begin{lemma}
\label{L3}
For each $\alpha\in\mathbb C$, the polynomials \eqref{eqB} satisfy the $3$-term recurrence relation
\begin{equation*}
((n+\alpha)^3-t^3)B_n^\alpha-(n+1)(2n^2+3n(\alpha+1)+\alpha^2+3\alpha+1)B_{n+1}^\alpha+(n+2)^2(n+1)B_{n+2}^\alpha=0
\end{equation*}
and the initial conditions $B_0^\alpha=1$, $B_1^\alpha=\alpha^2$.
In particular, $B_n^\alpha(t)\in\mathbb C[t^3]$ for $n=0,1,2,\dots$\,.
\end{lemma}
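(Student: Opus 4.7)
My plan is to mirror the argument used for Lemma~\ref{L1}: first verify the initial conditions by direct substitution, then establish the 3-term recurrence by creative telescoping applied to the summand in \eqref{eqB}, and finally deduce $t^3$-polynomiality by induction from the recurrence. The initial values are immediate. For $n=0$ the first representation in \eqref{eqB} has only the term $k=0$, which equals $(\alpha+t)_0(\alpha-t)_0/0!=1$, so $B_0^\alpha=1$. For $n=1$ the sum has two terms: $k=0$ contributes $(\alpha+t)(\alpha-t)=\alpha^2-t^2$, while $k=1$ contributes $(\omega t)(\omega^2t)=\omega^3t^2=t^2$; these add to $\alpha^2$, giving $B_1^\alpha=\alpha^2$. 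An analogous computation applied to the second representation produces the same two values, which is the first sanity check that the two forms agree.

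For the recurrence, I would apply the Gosper--Zeilberger algorithm of creative telescoping to the hypergeometric summand
$$
F(n,k)=\frac{(\omega t)_k(\omega^2t)_k(\alpha+t)_{n-k}(\alpha-t+k)_{n-k}}{n!\,k!\,(n-k)!}
$$
of the first form in \eqref{eqB}. The algorithm outputs a rational certificate $R(n,k)$ such that a linear combination $\sum_{j=0}^{2}c_j(n,\alpha)F(n+j,k)=\Delta_k\bigl(R(n,k)F(n,k)\bigr)$ holds identically in $k$, with coefficients $c_j(n,\alpha)$ matching those of the asserted recurrence. Summing in $k$ from $0$ to $n+2$ collapses the right-hand side to zero (since $F(n,k)$ is supported on $0\le k\le n$) and yields the claimed 3-term relation. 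The same procedure applied to the second summand in \eqref{eqB} produces the same recurrence with the same initial values, so by the uniqueness of solutions to a non-degenerate 3-term recursion the two representations coincide.

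Polynomiality then follows by induction on $n$. The leading coefficient $(n+2)^2(n+1)$ in the recurrence never vanishes for $n\ge0$, so we can solve for $B_{n+2}^\alpha$; the coefficient $(n+\alpha)^3-t^3$ of $B_n^\alpha$ manifestly lies in $\mathbb{C}[t^3]$ (its $t$-dependence is purely $-t^3$), while the coefficient of $B_{n+1}^\alpha$ is independent of $t$. Since $B_0^\alpha=1$ and $B_1^\alpha=\alpha^2$ are constants in $t$, induction gives $B_n^\alpha(t)\in\mathbb{C}[t^3]$ for every $n$. The main obstacle is the bookkeeping involved in the creative telescoping verification: running Zeilberger by hand with the extra parameter $\alpha$ is tedious, and one needs to confirm that the middle coefficient factors cleanly as $(n+1)(2n^2+3n(\alpha+1)+\alpha^2+3\alpha+1)$ and that no spurious $\alpha$-dependence creeps into the leading term. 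Conceptually, the content beyond Lemma~\ref{L1} is the observation that packaging $\alpha$ into the Pochhammer symbols as in \eqref{eqB} is precisely what preserves the cube-root-of-unity symmetry in $t$ responsible for the $t^3$-polynomial structure.
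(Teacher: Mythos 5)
Your proposal is correct and takes essentially the same route as the paper: the paper offers no separate proof of Lemma~\ref{L3}, presenting it as a direct extension of Lemma~\ref{L1}, whose proof is exactly Gosper--Zeilberger creative telescoping plus a straightforward check of the initial values, with membership in $\mathbb C[t^3]$ read off from the recursion just as in your induction. Your explicit verification that both representations in \eqref{eqB} give $B_0^\alpha=1$ and $B_1^\alpha=\alpha^2$, combined with the never-vanishing leading coefficient $(n+2)^2(n+1)$, is a sound way to make the paper's implicit argument complete.
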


In addition, we have $B_n^\alpha\in t^3\mathbb Q[t^3]$ for $\alpha=0,-1,\dots,-n+1$
(in other words, $B_n^\alpha(0)=0$ for these values of~$\alpha$).

\begin{lemma}
\label{L4}
$B_n^{1-n-\alpha}(t)=B_n^\alpha(t)$.
\end{lemma}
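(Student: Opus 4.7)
The plan is to establish the identity directly from the defining expression \eqref{eqB} by applying the Pochhammer reflection formula
$$
(a)_m=(-1)^m(1-a-m)_m
$$
to each of the two $\alpha$-dependent factors. Concretely, I would substitute $\alpha\mapsto 1-n-\alpha$ in the first line of \eqref{eqB}, obtaining
$$
B_n^{1-n-\alpha}(t)
=\frac1{n!}\sum_{k=0}^n\frac{(\omega t)_k(\omega^2 t)_k(1-n-\alpha+t)_{n-k}(1-n-\alpha-t+k)_{n-k}}{k!\,(n-k)!},
$$
and then rewrite the two Pochhammer symbols of length $n-k$ via the reflection formula.

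A short check gives
$$
(1-n-\alpha+t)_{n-k}=(-1)^{n-k}(\alpha-t+k)_{n-k}
$$
and
$$
(1-n-\alpha-t+k)_{n-k}=(-1)^{n-k}(\alpha+t)_{n-k},
$$
because in each case the shift $1-a-m$ with $m=n-k$ cancels against $1-n$, producing exactly the Pochhammer factors appearing in the first line of \eqref{eqB}. The sign $(-1)^{n-k}$ appears twice and disappears, so termwise the sum reduces to $B_n^\alpha(t)$.

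The main (and only) potential obstacle is a bookkeeping slip in the reflection step; once the two identities above are in hand, the conclusion is immediate, with no reindexing of the $k$-summation required. No appeal to the three-term recurrence of Lemma~\ref{L3} is needed, though one may verify consistency by checking that the recurrence coefficients are invariant under $\alpha\mapsto 1-n-\alpha$ (the polynomial $(n+\alpha)^3$ and the quadratic $2n^2+3n(\alpha+1)+\alpha^2+3\alpha+1$ are both symmetric under this substitution once the appropriate shift in $n$ is accounted for), providing a sanity check on the result.
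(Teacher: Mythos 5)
Your proof is correct. Both reflection steps check out: with $(a)_m=(-1)^m(1-a-m)_m$ one computes $1-(1-n-\alpha+t)-(n-k)=\alpha-t+k$ and $1-(1-n-\alpha-t+k)-(n-k)=\alpha+t$, so that
$$
(1-n-\alpha+t)_{n-k}\,(1-n-\alpha-t+k)_{n-k}=(\alpha+t)_{n-k}\,(\alpha-t+k)_{n-k},
$$
the two signs $(-1)^{n-k}$ cancel, and the sum in \eqref{eqB} is invariant term by term with no reindexing, exactly as you say. Your route is, however, genuinely different in packaging from the paper's one-line proof, which reads the symmetry off the hypergeometric representation
$$
B_n^\alpha(t)=\frac{(\alpha+t)_n(\alpha-t)_n}{n!^2}\,{}_3F_2\biggl(\begin{matrix} -n, \, \omega t, \, \omega^2t \\ \alpha-t, \, 1-\alpha-n-t \end{matrix}\biggm| 1\biggr):
$$
the substitution $\alpha\mapsto1-n-\alpha$ simply interchanges the two lower parameters of the $_3F_2$, while the prefactor is invariant by the same reflection formula applied with $m=n$, since $(1-n-\alpha+t)_n=(-1)^n(\alpha-t)_n$ and $(1-n-\alpha-t)_n=(-1)^n(\alpha+t)_n$. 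The two arguments are close relatives---deriving the $_3F_2$ form from \eqref{eqB} itself rests on the reflection formula---but yours is self-contained and purely elementary, whereas the paper's makes the symmetry structurally manifest and reuses the hypergeometric representation pattern already present in Lemma~\ref{L1}.

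One caveat concerning your closing ``sanity check'': it does not work as described. The substitution $\alpha\mapsto1-n-\alpha$ entangles the parameter with the index, so the three polynomials $B_n^{1-n-\alpha}$, $B_{n+1}^{-n-\alpha}$, $B_{n+2}^{-1-n-\alpha}$ carry three different superscripts and never occur in a single instance of the recurrence of Lemma~\ref{L3}, which holds for fixed $\alpha$. Moreover, at fixed $n$ the coefficients are not invariant: $(n+\alpha)^3\mapsto(1-\alpha)^3$, and a direct expansion shows $2n^2+3n(\alpha+1)+\alpha^2+3\alpha+1$ maps to $\alpha^2-(n+5)\alpha+n+5$, not to itself. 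Since you present this only as an optional consistency remark, the proof itself is unaffected, but the remark should be dropped or reformulated.
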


\begin{proof}
This follows from the hypergeometric representation
\begin{equation*}
B_n^\alpha(t)=\frac{(\alpha+t)_n(\alpha-t)_n}{n!^2}\,{}_3F_2\biggl(\begin{matrix} -n, \, \omega t, \, \omega^2t \\ \alpha-t, \, 1-\alpha-n-t \end{matrix}\biggm| 1\biggr).
\qedhere
\end{equation*}
\end{proof}

Here is one more property of the polynomials that follows from Euler's transformation \cite[Section 1.2]{Ba}.

\begin{lemma}
\label{L5}
We have
$$
\sum_{n=0}^\infty B_n^\alpha(t)z^n
=(1-z)^{1-2\alpha}\sum_{n=0}^\infty B_n^{1-\alpha}(t)z^n.
$$
\end{lemma}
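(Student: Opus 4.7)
The plan is to rewrite both sides of the claimed identity as a common double sum and obtain the claim termwise from Euler's classical ${}_2F_1$ transformation. Starting from the first line of~\eqref{eqB} and setting $m=n-k$, the generating function unfolds as
$$\sum_{n=0}^\infty B_n^\alpha(t)\,z^n
=\sum_{k=0}^\infty\sum_{m=0}^\infty\frac{z^{m+k}}{(m+k)!}\cdot\frac{(\omega t)_k(\omega^2t)_k(\alpha+t)_m(\alpha-t+k)_m}{k!\,m!}.$$
Using $(m+k)!=k!\,(k+1)_m$ pulls the $k$-dependence out of the inner sum and recognises the resulting $m$-series as a Gauss hypergeometric:
$$\sum_{n=0}^\infty B_n^\alpha(t)\,z^n
=\sum_{k=0}^\infty\frac{(\omega t)_k(\omega^2t)_k}{k!^2}\,z^k\cdot{}_2F_1\biggl(\begin{matrix}\alpha+t,\,\alpha-t+k\\k+1\end{matrix}\biggm|z\biggr).$$

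Next I would apply Euler's transformation ${}_2F_1(a,b;c;z)=(1-z)^{c-a-b}\,{}_2F_1(c-a,c-b;c;z)$ to each term with $(a,b,c)=(\alpha+t,\alpha-t+k,k+1)$. The exponent $c-a-b=1-2\alpha$ is independent of $k$, and the new upper parameters become $(1-\alpha)-t+k$ and $(1-\alpha)+t$. Pulling $(1-z)^{1-2\alpha}$ outside the $k$-sum leaves precisely the previous display but with $\alpha$ replaced by $1-\alpha$ (the outer coefficient $(\omega t)_k(\omega^2t)_k/k!^2$ carries no $\alpha$), which equals $\sum_{n=0}^\infty B_n^{1-\alpha}(t)\,z^n$.

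No step is a serious obstacle, as the content is essentially a direct reading of Euler's transformation once the inner $m$-series is recognised as a ${}_2F_1$. The only point requiring comment is the interchange of summation, which is legitimate for $|z|$ small by absolute convergence; the identity then extends to $|z|<1$ by analytic continuation.
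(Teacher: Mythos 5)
Your proof is correct and coincides with the paper's own argument: both unfold the double sum defining $B_n^\alpha$ into $\sum_{k}\frac{(\omega t)_k(\omega^2t)_k}{k!^2}z^k\,{}_2F_1(\alpha+t,\alpha-t+k;k+1;z)$ and apply Euler's transformation termwise, with the $k$-independent exponent $1-2\alpha$ factoring out. Your added remark on justifying the interchange of summation is a harmless refinement the paper leaves implicit.
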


\begin{proof}
Indeed,
\begin{align*}
\sum_{n=0}^\infty B_n^\alpha(t)z^n
&=\sum_{k=0}^\infty\frac{(\omega t)_k(\omega^2t)_k}{k!^2}\,z^k\cdot{}_2F_1\biggl(\begin{matrix} \alpha+t, \, \alpha-t+k \\ k+1 \end{matrix}\biggm| z\biggr)
\nonumber\displaybreak[2]\\
&=\sum_{k=0}^\infty\frac{(\omega t)_k(\omega^2t)_k}{k!^2}\,z^k\cdot(1-z)^{1-2\alpha}{}_2F_1\biggl(\begin{matrix} 1-\alpha+t, \, 1-\alpha-t+k \\ k+1 \end{matrix}\biggm| z\biggr)
\nonumber\\
&=(1-z)^{1-2\alpha}\sum_{n=0}^\infty B_n^{1-\alpha}(t)z^n.
\qedhere
\end{align*}
\end{proof}

\begin{proof}[Alternative proof of Lemma~\textup{\ref{L2}}]
It follows from Lemma~\ref{L5} that
$$
B_n^1(t)=\sum_{k=0}^nB_k(t),
$$
hence $B(1;t)=\lim_{n\to\infty}B_n^1(t)$ and the latter limit is straightforward from \eqref{eqB}.
\end{proof}

Note that, with the help of the standard transformations of $_3F_2(1)$ hypergeometric series, we can also write \eqref{eqB} as
\begin{equation*}
B_n^\alpha(t)=\frac{(\alpha-\omega t)_n(\alpha-\omega^2t)_n}{n!^2}\,{}_3F_2\biggl(\begin{matrix} -n, \, \alpha+t, \, t \\ \alpha-\omega t, \, \alpha-\omega^2t \end{matrix}\biggm| 1\biggr),
\end{equation*}
so that the generating functions of the continuous dual Hahn polynomials lead to the generating functions
$$
\sum_{n=0}^\infty\frac{n!}{(\alpha-t)_n}\,B_n^\alpha(t)z^n
=(1-z)^{-t}\,{}_2F_1\biggl(\begin{matrix} \alpha+\omega t, \, \alpha+\omega^2t \\ \alpha-t \end{matrix}\biggm| z\biggr)
$$
and
$$
\sum_{n=0}^\infty\frac{(\gamma)_n\,n!}{(\alpha-\omega t)_n(\alpha-\omega^2t)_n}\,B_n^\alpha(t)z^n
=(1-z)^{-\gamma}\,{}_3F_2\biggl(\begin{matrix} \gamma, \, \alpha+t, \, t \\ \alpha-\omega t, \, \alpha-\omega^2t \end{matrix}\biggm| \frac z{z-1} \biggr),
$$
where $\gamma$ is arbitrary.

Finally, numerical verification suggests that for real $\alpha$ the zeroes of $B_n^\alpha$ viewed as polynomials in $x=t^3$
lie on the real half-line $(-\infty,0]$.

\section{Polynomials related to the alternating MZV identity}
\label{s5}

Writing
\begin{align*}
A(z;t)
&=\sum_{n=0}^\infty A_n(t)z^n
\\
&=1+\tfrac14t^3z^2-\tfrac16t^3z^3+\bigl(\tfrac1{192}t^3+\tfrac{11}{96}\bigr)t^3z^4
-\bigl(\tfrac1{240}t^3+\tfrac1{12}\bigr)t^3z^5
\\ &\qquad
+\bigl(\tfrac1{34560}t^6+\tfrac{23}{5760}t^3+\tfrac{137}{2160}\bigr)t^3z^6+O(z^7)
\end{align*}
and using the equation
$$
\biggl((1+z)\frac{\d}{\d z}\biggr)^2\biggl(z\frac{\d}{\d z}\biggr)A(z;t)=t^3A(-z;t),
$$
we deduce that
\begin{align}
(n^3-T)A_n+(n+1)^2(2n+1)A_{n+1}+(n+2)^2(n+1)A_{n+2}&=0,
\label{rec1}
\\ \intertext{where $T=(-1)^nt^3$. Producing two shifted copies of~\eqref{rec1},}
((n-1)^3+T)A_{n-1}+n^2(2n-1)A_n+(n+1)^2nA_{n+1}&=0,
\label{rec2}
\\
((n-2)^3-T)A_{n-2}+(n-1)^2(2n-3)A_{n-1}+n^2(n-1)A_n&=0,
\label{rec3}
\end{align}
then multiplying recursion \eqref{rec1} by $n(n-1)^2(2n-3)$, recursion \eqref{rec2} by $-(n-1)^2\*(2n+\nobreak1)(2n-3)$,
recursion \eqref{rec3} by $(2n+1)((n-1)^3+T)$ and adding the three equations so obtained we arrive at
\begin{align*}
&
(2n+1)((n-1)^3+T)((n-2)^3-T)A_{n-2}
\nonumber\\ &\quad
-n(n-1)(2n-1)(2n(n-1)(n^2-n-1)-3T)A_n
\nonumber\\ &\quad
+(n+2)^2(n+1)n(n-1)^2(2n-3)A_{n+2}
=0.
\end{align*}
This final recursion restricted to the subsequence $A_{2n}$, namely
\begin{align}
&
(4n+5)((2n)^3-t^3)((2n+1)^3+t^3)A_{2n}
\nonumber\\ &\quad
-(4n+3)(2n+1)(2n+2)(2(2n+1)(2n+2)(4n^2+6n+1)-3t^3)A_{2n+2}
\nonumber\\ &\quad
+(4n+1)(2n+1)^2(2n+2)(2n+3)(2n+4)^2A_{2n+4}
=0,
\label{rec}
\end{align}
and, similarly, to $A_{2n+1}$ gives rise to two families of so-called
Frobenius--Stickelberger--Thiele polynomials \cite{STZ}. The latter connection, however, sheds no light on the asymptotics of $A_n(t)\in\mathbb Q[t^3]$.
Unlike the case of $B(z;t)$ treated in Section \ref{s3} we cannot find closed form expressions for those subsequences.
Here is the case most visually related to the recursion \eqref{rec}:
\begin{align}
&
(4n+5)\frac{(2n)^3-t^3}{t-2n}\,\frac{(2n+1)^3+t^3}{t+2n+1}\,A_n'
\nonumber\\ &\quad
-(4n+3)(2n+1)(2n+2)((2n+1)(2n+2)+(8n^2+12n+1)t+3t^2)A_{n+1}'
\nonumber\\ &\quad
+(4n+1)(2n+1)^2(2n+2)(2n+3)(2n+4)^2A_{n+2}'
=0,
\nonumber
\end{align}
where
$$
A_n'=\frac1{2^n\,(1/2)_n\,n!}\sum_{k=0}^n\frac{(\omega t/2)_k(\omega^2t/2)_k(t/2)_{n-k}(1/2)_{n-k}}{k!\,(n-k)!}\,(-1)^k.
$$
The latter polynomials are not from $\mathbb Q[t^3]$.

If we consider $\wt A_n(t)=\sum_{k=0}^nA_k(t)$ then (it is already known \cite{BBBL,Zh} that)
$$
(n^3-(-1)^nt^3)\wt A_{n-1}+(2n+1)n\wt A_n-(n+1)^2n\wt A_{n+1}=0, \qquad n=1,2,\dotsc.
$$
As before, the standard elimination translates it into
\begin{align}
&
(2n+3)((n-1)^3+T)(n^3-T)\wt A_{n-2}
\nonumber\\ &\quad
-(2n+1)n(n-1)(2(n^2+n+1)^2-6-T)\wt A_n
\nonumber\\ &\quad
+(2n-1)(n+2)^2(n+1)^2n(n-1)\wt A_{n+2}
=0,
\nonumber
\end{align}
where $T=(-1)^nt^3$. One can easily verify that
$$
\wt A_n(t)=1+\dots+\frac{t^{3\lf n/2\rf}}{2^{\lf n/2\rf}\lf n/2\rf!\,n!}
$$
but we also lack an explicit representation for them.

We have checked numerically a fine behaviour (orthogonal-polynomial-like) of the zeroes of $A_n$ and $\wt A_n$
viewed as polynomials in $x=t^3$ (both of degree $[n/2]$ in~$x$). Namely, all the zeroes lie on the real half-line $(-\infty,0]$.
This is in line with the property of the polynomials $B_n^\alpha$ (see the last paragraph in Section~\ref{s4}).

\begin{acknowledgements}
The work originated from discussions during
the research trimester on Multiple Zeta Values, Multiple Polylogarithms and Quantum Field Theory at ICMAT in Madrid (September--October 2014)
and was completed during the author's visit in the Max Planck Institute for Mathematics in Bonn (March--April 2015);
I thank the staff of the institutes for the wonderful working conditions experienced during these visits.
I am grateful to Valent Galliano, Erik Koelink, Tom Koornwinder and Slava Spiridonov for valuable comments
on earlier versions of the note. I am thankful as well to the two anonymous referees for the valuable feedback on the submitted version.
\end{acknowledgements}


\end{document}